\documentclass[reqno, 11pt]{amsart}  
 
\usepackage{bm}
 \newcommand{\mysection}[1]{\section{#1}
 \setcounter{equation}{0}}

\newtheorem{theorem}{Theorem}
\newtheorem{lemma}[theorem]{Lemma}
\newtheorem{corollary}[theorem]{Corollary}

\theoremstyle{definition}

\theoremstyle{remark}

\newtheorem{remark}[theorem]{Remark}
 
\newcommand\loc{\textnormal{loc}}

\newcommand{\vsharp}{\asymp\kern -.5em\|}

 \makeatletter
 \def\dashintindex{\operatorname%
 {-\kern-.7em\DOTSI\intop\ilimits@}}%
 \makeatother

\newcommand\sft{{\sf t}}

\newcommand\bR{\mathbb{R}}

\newcommand\cL{\mathcal{L}}

  {{}}

\renewcommand\){{\rm)}}

\def\+){\tmspace+\thinmuskip{.05em}\)}

\def\dashnorm{\,\,\text{\bf--}\kern-.5em\|}

\newcommand{\nliminf}{\operatornamewithlimits{\underline{lim}}}

\newcommand{\sign}{\text{\rm\,sign}\,}

\renewcommand{\eqref}[1]{\text{(\ref{#1})}}

\begin{document}

\title[Heat equation with singular drift]{Heat equation with singular drift
which has bounded solution discontinuous
at an interior point}
\author[]{N.V. Krylov}
\address{School of Mathematics, University of Minnesota, Minneapolis, MN, 55455}
\email{nkrylov@umn.edu}

\subjclass{35A01, 60H10}

\keywords{Heat equation, singular drift, bounded discontinuous solutions,
strong Markov non Feller diffusion processes}

\begin{abstract}
 The goal of this paper is to present a one dimensional heat equation
with a singular drift for which solutions
of the Cauchy problem with smooth initial data
are bounded and are discontinuous at an interior point (common to all solutions).

\end{abstract}

\maketitle

\mysection{Introduction}

This paper is a contribution to a very active research field
of constructing solutions of It\^o equations with singular
drift term and investigating the elliptic
and parabolic equation associated with them.
 We sent the reader to the bibliography
at the end of the paper and to numerous references
in each item. Our drift term does not satisfy
the requirements in those papers apart from \cite{Kr_27}
and the corresponding solutions of It\^o equation
is not weakly unique, which entails that the corresponding
Markov process is not a Feller process and
the solutions of the corresponding differential
equations in domains develop bounded discontinuities even
inside the domain.

We present some
pathological properties of the operator
\begin{equation}
                           \label{6.10.1}
\cL u(t,x)=\partial_{t}u(t,x)+(1/2)D^{2}u(t,x)+b(t,x)Du(t,x) 
\end{equation}
 which seem quite unusual from what 
is expected from a common parabolic operator. Here  $(t,x)\in\bR^{2}$, $\partial_{t}
=\partial/\partial t$, $D
=\partial/\partial x$,
$$
b(t,x)=I_{t>0}t^{-\nu}\sign x,\quad
\nu\in(1/2,1).
$$

Not long ago M\'at\'e Gerencs\'er drew my attention to the following one-dimensional
stochastic equation
\begin{equation}
                            \label{9.7.1}
x_{t}=w_{t}+\int_{0}^{t}b(s,x_{s})\,ds.
\end{equation}
It turns out that this equation has many solutions with different finite-dimen\-sional
distributions (no weak uniqueness). The reason for that is that, if we take
$x_{0}>0$ and consider
$$
x_{t}=x_{0}+w_{t}+\int_{0}^{t}s^{-\gamma}\sign  x_{s}\,ds,
$$
then before $x_{t}$ reaches the origin
we have $x_{t}=x_{0}+(1-\gamma)^{-1}t^{1-\gamma}$ and, since $|w_{t}|$
 is much less than $(1-\gamma)^{-1}t^{1-\gamma}$ for small $t$,
there is $\varepsilon>0$ such that with probability $\geq 3/4$
for all $t\leq \varepsilon$
$$
w_{t}+(1-\gamma)^{-1}t^{1-\gamma}>(1/2)(1-\gamma)^{-1}t^{1-\gamma}.
$$
Therefore, there is a solution (of the equation with $x_{0}=0$) such that 
$$
P(x_{t}>(1/2)(1-\gamma)^{-1}t^{1-\gamma},t\leq \varepsilon)\geq 3/4.
$$
Similarly, there is a solution (of equation with $x_{0}=0$) such that 
$$
P(x_{t}<-(1/2)(1-\gamma)^{-1}t^{1-\gamma},t\leq \varepsilon)\geq 3/4.
$$
Hence, no weak uniqueness.

On the other hand, since
$$
\rho\Big(\frac{1}{\rho^{2}}\int_{0}^{\rho^{2}}t^{-\nu q}\,dt\Big)^{1/q}\to0
$$
as $\rho\to\infty$ and $q\nu<1$
for a $q>1$
there is a   strong Markov $\bR^{2}$-valued diffusion process $X=((\sft_{s},x_{s}), P_{t,x})$
corresponding to the operator $\cL$ with trajectories issued from the point $(t,x)$
having the distribution $P_{t,x}$ given by the distribution of a  solution  of the system
\begin{equation}
                                 \label{8.7.2}
\sft_{s}=t+s,\quad
x_{s}= x+w_{t}+\int_{0}^{s}(\sft_{r})_{+}^{-\gamma}\sign  x_{r}\,dr
\end{equation}
(see Theorem 1.5 and Remark 1.8 of \cite{Kr_27}). 
Let us call such $X$ an $\cL$-process.
It turns out that $\cL$-processes are not unique,
meaning that their finite-dimensional distributions
are not identical.
Indeed, the way one constructs an $\cL$-process   is based
on the fact that, if there is a solution
of \eqref{9.7.1} and an $f\in C^{\infty}_{0}
(\bR^{2})$, then there exists  $\cL$-processes $X',X''$ such that
$$
E'_{0,0}\int_{0}^{\infty}f(\sft_{s},x_{s})\,ds
\geq E \int_{0}^{\infty}f(s,x_{s})\,ds
\geq E''_{0,0}\int_{0}^{\infty}f(\sft_{s},x_{s})\,ds.
$$
We know that there are two solutions of
\eqref{9.7.1} with different distributions,
 therefore, the middle term in the above inequalities is not uniquely defined
and the distributions of $X'$ and $X''$
are different. As a standard consequence of
the above nonuniqueness we have the following.

\begin{remark}
                                  \label{remark 9.8.1}
For  $p,q\in(1,\infty)$ denote by $L_{(p,q)}$ the space of functions $f(t,x)$
on $(0,1)\times \bR$ such that
$$
\|f\|_{L_{(p,q)}}=\begin{cases}
\Big(\int_{0}^{1}
\Big(\int_{\bR}|f(t,x)|^{p}\,dx\Big)^{q/p}
\,dt\Big)^{1/q}<\infty\quad\text{if}\quad p\geq q;\\
\Big(\int_{\bR}
\Big(\int_{0}^{1}|f(t,x)|^{q}\,dt\Big)^{p/q}
\,dx\Big)^{1/p}<\infty\quad\text{if}\quad p\leq q.
\end{cases}
$$
Call $p,q\in(1,\infty)$ admissible if
$$
\frac{1}{p}+\frac{1}{q}\leq 1.
$$
 Also let  $C^{1,2}_{0} $ be the set of continuous functions $u$
on $[0,1]\times \bR$, whose
partial derivatives
$\partial_{t}u,D^{2}u,Du$ are continuous and bounded in 
 $[0,1]\times \bR$,  $u(1,\cdot)=0$, and $u(t,x)=0$
 if $|x|$ is large.
 
 Then it turns out that $M:=\{\cL u:u\in C^{1,2}_{0} \}$
 is not dense in   $L_{(p,q)}$ if   $p,q$ are admissible.
 
 Indeed,   by It\^o's formula for an $\cL$-process   $X$ 
 and $u\in C^{1,2}_{0}$
 $$
 I(f):=E_{0,0}\int_{0}^{1}f(t,x_{t})\,dt=u(0,0)
 $$
 given that $\cL u=-f$, implying that the left-hand side
 is the same for any $\cL$-process. Since we know from Theorem 2.10 of \cite{Kr_27}
 that he functional $I(f)$ is bounded on  
 each $L_{(p,q)}$ if   $p,q$ are admissible, the denseness
 of $M$ would imply that $I(f)$ is the same for all
 $X$ if $f\in L_{(p,q)}$, which is false.

\end{remark}

The author set himself a problem to find how the above nonuniqueness
reflects in the properties of $\cL$. For instance,
a ``reasonably unique'' solution of the Cauchy problem
\begin{equation}
                           \label{5.30.1}
\cL u(t,x)=0
\end{equation}
in $ (-\infty,1)\times \bR$
with terminal condition
\begin{equation}
                           \label{5.30.2}
u(1,x)=g(x),
\end{equation}
 should be written as 
\begin{equation}
                              \label{9.7.2}
u(t,x)=E_{t,x}g(x_{1-t}),
\end{equation}
where $x_{s}$ solves \eqref{8.7.2}.

But what if $t=x=0$? Which solution out of many
should we take in this representation?
We will see that \eqref{9.7.2} holds
at all points in $ (-\infty,1)\times \bR$
apart from the origin at which point
the ``reasonably unique'' $u$ is discontinuous.
It turns out that the unique
solutions of the Cauchy problem \eqref{5.30.1}-\eqref{5.30.2} can develop bounded discontinuities
at the origin even when it is inside the domain $ (-\infty,1)\times \bR$.

 To show that we take $g(x)=\zeta(|x|)\sign x$, where
  $\zeta(t)\geq0$ is infinitely differentiable, nondecreasing,
is zero for $t\leq1/2$, equals one for $t\geq1$.
Set
$$
B(t)=\int_{-\infty}^{t}b(s)\,ds.
$$

\begin{theorem}
                      \label{theorem 5.30.2}
(i) There is a unique bounded 
and continuous function $u$ defined in
$G:=\big((-\infty,1]\times\bR\big)
\setminus(0,0)$, odd with respect to $x$, 
having there Sobolev derivatives
$\partial_{t}u,Du,D^{2}u$, satisfying
\eqref{5.30.1}
in $ (-\infty,1)\times \bR$ (a.e) and
\eqref{5.30.2} on $\bR$, and possessing the following properties:

(a) the function $Du$ is   continuous 
in $G$, all other derivatives of $u$ with respect to $x$ are continuous in $\big((-\infty,1)\times\bR\big)\setminus
\big([0,1]\times\{0\}\big)$;

(b) the function $u-g$ is of class $W^{1,2}_{p}
\big((\delta,1)\times \bR \big)$ for any $p>1$ and $\delta\in(0,1)$, and in $ (-\infty,
-\delta]
\times \bR $ the function $u$ is infinitely differentiable in $(t,x)$ with each derivative bounded for any $\delta>0$;

(c) the function $u(t,x)$ is an increasing
function of $x$ for any $t\leq 1$;

(d) the function $v(t,x):=u(t,x+B(t))$ is 
bounded and continuous in $G$, infinitely differentiable in $(t,x)$ in
$\Gamma:=\big((-\infty,1)\times\bR\big)\cap\{(t,x):x+B(t)>0\}$ and satisfies
there
\begin{equation}
                              \label{5.30.3}
\partial_{t}v+(1/2)D^{2}v=0;
\end{equation}

(ii) the limit of $u(t,x)$ as $(t,x)\to(0,0)$
does not exist.
\end{theorem}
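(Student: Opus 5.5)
The plan is to exploit oddness and reduce everything to a Dirichlet problem for the backward heat equation in the half-plane $x>0$, straightened by the shift $x\mapsto x+B(t)$. Here $B(t)=\int_{-\infty}^t b(s)\,ds$, understood as the integral of $s^{-\nu}I_{s>0}$, so $B(t)=0$ for $t\le0$ and $B(t)=(1-\nu)^{-1}t^{1-\nu}$ for $t>0$.

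\emph{Reduction.} An odd $u$ vanishes on $x=0$. On $x>0$ it solves $\partial_tu+\frac12D^2u+I_{t>0}t^{-\nu}Du=0$. Setting $v(t,x)=u(t,x+B(t))$ gives $\partial_tv=\partial_tu+B'Du$, so $v$ solves \eqref{5.30.3} in $\Gamma=\{x>-B(t),\ t<1\}$, with $v=0$ on the curve $x=-B(t)$ and $v(1,x)=g(x+B(1))$. I therefore define, for $(t,x)\in\bar\Gamma$,
$$
v(t,x)=E\,g\big(x+w_{1-t}+B(1)\big)I_{\tau>1-t},\qquad \tau=\inf\{s:x+w_s\le -B(t+s)\},
$$
then $u(t,x)=v(t,x-B(t))$ for $x\ge0$, and extend $u$ oddly.

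\emph{Properties (a)--(d) and uniqueness.} I would check the following, in order.
\begin{itemize}
\item Smoothness of $v$ in $\Gamma$ and \eqref{5.30.3} are standard interior properties of the heat equation.
\item Boundary points $(t,-B(t))$ with $t\ne0$ are regular: the curve is flat for $t<0$ and locally Lipschitz for $t>0$, so an exterior cone condition holds. Hence $v$ is continuous on $\bar\Gamma\setminus\{(0,0)\}$, and so is $u$ on $G$.
\item For $t\le-\delta$ the function $u$ solves the pure heat equation with odd data, which gives the smoothness in (b).
\item For $t\ge\delta$ the drift is bounded. I would show that the odd extension satisfies $\cL u=0$ across $x=0$ in the Sobolev sense, using that $Du$ matches from both sides by oddness and that $D^2u$ has no singular part. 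The $W^{1,2}_p$ theory for equations with bounded measurable drift then gives $u-g\in W^{1,2}_p$ and continuity of $Du$, which yields (a) and (b).
\item For (c), $g\ge0$ is nondecreasing on $x>0$. Driving the processes from $x<x'$ by the same $w$, the one from $x'$ stays above and is killed later, so $v(t,x)\le v(t,x')$. Together with $u\ge0$ for $x>0$ and oddness, this gives monotonicity on all of $\bR$.
\item For uniqueness, take the difference of two bounded solutions. Its restriction to $x>0$ is a bounded caloric function in $\Gamma$ vanishing on $\partial\Gamma\setminus\{(0,0)\}$ and at $t=1$. By Itô's formula up to exit, it equals $E[\,\cdot\,;\text{exit through }(0,0)]$. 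Space-time Brownian motion hits the single point $(0,0)$ with probability zero, so the difference vanishes.
\end{itemize}

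\emph{Part (ii), the main point.} I would approach $(0,0)$ along two different routes.
\begin{itemize}
\item Along $t=0$, $x\downarrow0$: we have $u(0,x)=v(0,x)$, and $P\big(w_s>-B(s)\ \forall s\le\varepsilon\big)\to1$ as $\varepsilon\to0$, by the law of the iterated logarithm and $1-\nu<1/2$. On the event where the path survives the short initial interval, the later behaviour is controlled by continuity of $v$ at points $(\varepsilon,y)$ with $y>-B(\varepsilon)$. This should give $\liminf_{x\downarrow0}u(0,x)\ge c>0$. A cleaner way to obtain $c$ is to note that $v(0,0+)$ is at least the survival probability times $\min$ of $v$ over a compact set of $\Gamma$, which is positive by the strong maximum principle.
\item Along $t=-s$, $x=s$ with $s\downarrow0$: for $t<0$ the boundary is $x=0$. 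Brownian motion started at $x=s$ hits $0$ before time $|t|=s$ with probability tending to $1$, because $s\ll\sqrt s$. Hence $u(-s,s)\to0$.
\end{itemize}
The two limits differ, so $\lim u$ at the origin does not exist. I expect the main obstacle to be making the positive lower bound at $(0,0+)$ rigorous. This requires combining the small-time survival estimate against a boundary receding like $t^{1-\nu}$ with a uniform positivity of $v$ away from the boundary.
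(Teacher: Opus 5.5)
Your proof is correct, but it differs from the paper's in both parts.

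\textbf{Part (i).} The paper never uses a probabilistic representation. It replaces $B$ on $[t_\delta,\delta]$ by a tangent segment, so the drift $b_\delta$ is bounded, and solves that problem in $W^{1,2}_p$. It then defines $v$ as the monotone limit of the shifted solutions $v_\delta$ on the shrinking domains $\Gamma_\delta$. Since $u=u_\delta$ on $[\delta,1]\times\bR$, the $W^{1,2}_p$ property and the continuity of $Du$ there come for free. Monotonicity follows from the maximum principle applied to $Dv_\delta$. Uniqueness follows from bounded-drift uniqueness on $[\varepsilon,1]$, continuity down to $t=0$, and the heat equation for $t<0$.

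You build $v$ directly from killed Brownian motion. This makes (c) a one-line coupling and reduces uniqueness to the polarity of the point $(0,0)$. The price is that you must glue the odd extension across $x=0$ for $t\ge\delta$ yourself. That needs $v$ to be $C^1$ up to the smooth lateral boundary $x=-B(t)$, $t>0$, which holds because the terminal data vanish near the corner $(1,-B(1))$. You should also note that $\partial_t u$ exists in the Sobolev sense across $t=0$, $x\neq0$, since $B'=I_{t>0}t^{-\nu}$ is locally integrable and $Dv$ is continuous.

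\textbf{Part (ii).} The paper approaches the origin from $t>0$ along $(2^{-n},2^{-n/2})$. On dyadic time intervals it compares $u$ with explicit solutions having constant drift $b_n\le b$; this is where $Du\ge0$ is used. It then shows that the product of the resulting Gaussian factors $\alpha_n$ is positive.

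You approach along $t=0$ instead, combining three ingredients:
\begin{itemize}
\item the Markov property at a fixed small time $\varepsilon$;
\item the law of the iterated logarithm, since the boundary recedes like $s^{1-\nu}\gg\sqrt{s\log\log(1/s)}$;
\item the bound $\min_K v>0$ on a compact $K\subset\Gamma$.
\end{itemize}
The resulting lower bound is uniform in $x\in(0,1/2)$, because survival of $w$ implies survival of $x+w$. Your argument is shorter and makes the heuristic of the introduction rigorous. It also needs neither (c) nor explicit kernels. The paper's argument avoids stochastic tools and gives an explicit quantitative bound $u_0\prod_n\alpha_n$.

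Finally, your second route ($t=-s$, $x=s$) is unnecessary. Oddness gives $u(t,0)=0$ for $t\neq0$, which already supplies the competing limit $0$.
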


{\bf Proof of  assertion (i)
in Theorem \ref{theorem 5.30.2}}.
{\em Uniqueness\/}. We have to prove that if $g=0$,
then $u=0$. Observe that in equation \eqref{5.30.1} the coefficient $b$ is bounded in any
$[\varepsilon,1]\times \bR$, $\varepsilon\in(0,1)$. Therefore, any bounded continuous solution of class
$W^{1,2}_{p,\loc}([\varepsilon,1]\times \bR)$
vanishing at $t=1$ is zero. By continuity
 $u(0,x)=0$ for $x\ne0$.   Bounded solutions
of the heat equation vanishing on the boundary
are zeros and this proves uniqueness.

{\em Existence\/}. Since the lateral boundary
of $\Gamma$ has a horizontal tangent line
at $(0,0)$, we need an auxiliary construction.
For small $\delta\in(0,1)$ define 
$(t_{\delta},0)$ ($t_{\delta}<0$) as the point of the
intersection of the $t$-axes with the tangent line to $(t,B(t))$ at $(\delta,B(\delta))$
and let
$B_{\delta}(t)=B(t)$ for $t\not\in[t_{\delta},\delta]$
and for $t\in[t_{\delta},\delta]$ let the graph of $B_{\delta}(t)$ be 
the straight segment between the point
 $(\delta,B(\delta))$ and
the point $(t_{\delta},0)$.

Since  $b_{\delta}$ is bounded   and $Dg$ has compact support, there exists a (unique) bounded function $u_{\delta}$ such that
$u_{\delta}-g\in W^{1,2}_{p}((-\infty,1]\times \bR)$, and $u_{\delta}$ solves the problem \eqref{5.30.1}-\eqref{5.30.2} with $b_{\delta}$
in place of $t_{+}^{-\nu}$. The uniqueness
implies that $u_{\delta}$ is odd with respect to $x$
and the arbitrariness of $p$   implies that $u_{\delta}$
and $Du_{\delta}$ are bounded and continues in 
 $(-\infty,1]\times \bR $. 
Obviously, for $0<\delta\leq\delta'$
all $u_{\delta}$ coincide on $[\delta',1]\times\bR$. Therefore,
\begin{equation}
                               \label{6.3.1}
u=\lim_{\delta\downarrow0}u_{\delta}
\end{equation}
exists in $(0,1)\times\bR$ and
$u-g$ is of class $W^{1,2}_{p}
\big( (\delta,1)\times \bR\big)  $ for any $p>1$ and $\delta\in(0,1)$.  In particular,
$u,Du$ are continuous in $(0,1]\times\bR$.

Also, by the maximum principle,
$$
|u_{\delta}|\leq 1.
$$

Since $u_{\delta}$ is odd, $u_{\delta}(t,0)=0$ and $u_{\delta}$ solves the modified \eqref{5.30.1}-\eqref{5.30.2} in  $(-\infty,1]\times [0,\infty)$ with nonnegative boundary data.
By the maximum principle $u_{\delta}\geq 0$ in
$(-\infty,1]\times [0,\infty)$, and then
$Du_{\delta}(t,0)\geq 0$.

Next, let $v_{\delta}(t,x)=u_{\delta}(t,x+B_{\delta}(t))$. Then 
$v_{\delta}\geq 0$ satisfies \eqref{5.30.3}
in 
$$
\Gamma_{\delta}:=\big((-\infty,1)\times\bR\big)
\cap\{(t,x):x+B_{\delta}(t)>0\}
$$
 and vanishes on its lateral boundary.
If $0<\delta'<\delta$, then $B_{\delta'}\leq
B_{\delta}$ and $\Gamma_{\delta'}\subset
\Gamma_{\delta}$. Also, $v_{\delta}$
satisfies \eqref{5.30.3} in $\Gamma_{\delta'}$
and is nonnegative on its lateral boundary,
which implies that $v_{\delta'}\leq v_{\delta}$
in $\Gamma_{\delta'}$ and in $\bar \Gamma_{\delta'}$. It follows that
\begin{equation}
                            \label{6.2.1}
v(t,x):=\lim_{\delta\downarrow 0}v_{\delta}(t,x)
\end{equation}
exists for $(t,x)\in \bar \Gamma$.
Obviously, $v=0$ on the lateral boundary
of $\Gamma$ with the exception, perhaps, of the point $(0,0)$,
which does not belong to the union
of all lateral boundaries of $\Gamma_{\delta}$. As the limit of solutions of the heat equation satisfies the same equation, $v$
satisfies \eqref{5.30.3} in $\Gamma$.

The function $v_{\delta}$ is smooth in $\Gamma_{\delta}$ and the bounded continuous function 
$Dv_{\delta}$ 
in $\bar \Gamma_{\delta}$ also is smooth
in $\Gamma_{\delta}$, satisfies \eqref{5.30.3}
and is nonnegative
on the boundary of $\Gamma_{\delta}$. By
the maximum principle $Dv_{\delta}\geq0$
in $\bar \Gamma_{\delta}$ implying that
\begin{equation}
                             \label{6.3.3}
u(t,x):=v(t,|x|-B(t))\sign x
\end{equation}
is an increasing function of $x $. 
Of course, this definition agrees with
\eqref{6.3.1} because
$$
v(t,|x|-B(t))\sign x=\lim_{\delta\downarrow 0}
u_{\delta}\big(t,|x|+B_{\delta}(t)-B(t)\big)\sign x
$$
and if $t\ne0$
$$
v(t,|x|-B(t))\sign x=\lim_{\delta\downarrow 0}
u_{\delta}(t,x ).
$$

Note that $u(t,x)=v(t,|x|)$ for $t\leq0$,
so that on this set $u$ satisfies $\partial_{t}u+D^{2}u=0$. Therefore, in $\big((-\infty,
-\delta]
\times \bR\big)\setminus\{(0,0)\}$ the function $u$ is infinitely differentiable in $(t,x)$ with each derivative bounded for any $\delta>0$

 By the properties of bounded solutions of the heat equation, on any subset
 of $\Gamma $ lying at a strictly positive distance of its   boundary
any derivative of $v $ is bounded.
It follows from \eqref{6.2.1} that on
on any such subset of $\Gamma$
any derivative of $v $ is continuous.
In particular, all $D^{k}u$ are continuous
in $(-\infty,1)\times[\delta,\infty)$ and
in $(-\infty,1)\times(-\infty,-\delta]$.
They are also continuous in $\{t<0\}$ as the
derivatives of solutions of the heat equation.
Thus, they are continuous in $\big((-\infty,1)\times\bR\big)\setminus  
\big([0,1]\times\{0\}\big)$.
The function $Du$ is continuous in $G$
because it is continuous not only in $\big((-\infty,1)\times\bR\big)\setminus
\big([0,1]\times\{0\}\big)$, but in $(0,1]\times\bR$ as well. 

Equation \eqref{6.3.3} implies that for
all $t\leq 1$ apart from $t=0$
$$
\partial_{t}u(t,x)=\big(\partial_{t}v+b(t)Dv\big)\big(t,|x|-B(t)\big) \sign x,
$$
where the right-hand side is locally summable
in $\{t<1,|x|>0\}$ ($\partial_{t}v,Dv$
are continuous) and $u$ is continuous in $G$.
It follows that   the Sobolev
derivative of $u$ with respect to $t$ exists
on $\{t<1,|x|>0\}$ and is given by the above formula. That it exists in $(0,1)\times \bR$
follows from what was said at the beginning of the  proof. Similarly, the Sobolev derivatives
$Du,D^{2}u$ exist in $\{t<1,|x|>0\}\cup\big((0,1)\times \bR\big)$. That the Sobolev derivatives $\partial_{t}u,Du,D^{2}u$ exist
in $\{t<0\}$ follows from the fact that there $u$ is a solution of \eqref{5.30.3}. Finally,
the fact that $u$ satisfies \eqref{5.30.1}
follows from \eqref{5.30.3} and \eqref{6.3.3}.
This completes the proof of assertion (i)
of Theorem \ref{theorem 5.30.2}. \qed

To prove    assertion (ii) of Theorem 
\ref{theorem 5.30.2} we are going to prove that
$$
\nliminf_{n\to\infty}u(2^{-n},2^{-n/2})>0,\quad \big(u(t,0)=0, t\in(0,1]\big).
$$
Observe that for $t\in(0,1]$ and $x>0$
$$
\partial_{t}u(t,x)+(1/2)D^{2}u(t,x)+t^{-\gamma}Du(t,x)=0,\quad u(t,0)=0,\quad
u(1,x)=g(x).
$$
 Approximate $t^{-\gamma}$ by piece-wise constant functions.
For $n=0,1,2,...$
define
$t_{n}=2^{-n}, b_{n}=t_{n }^{-\gamma},  x_{n}=2^{-n/2}$.
The drift $b_{n}$ is set to be applied on
$[t_{n+1},t_{n}) $. Then set
\[
p (t,x )=(2\pi t)^{-1/2}e^{-x^{2}/(2t)},
\]
\[
q_{n}(t,x,y)=[p (t,y-x)-p (t,y+x)]
e^{  b_{n}(y-x)-(1/2)  b_{n}^{2}t}.
\]

It is easy to check the well-known fact that
$q_{n}(t,x,s,y):=q_{n}(s-t,x,y)I_{s>t}$ is the fundamental solution
of   for $\partial_{t}v+(1/2)D^{2}v+  b_{n} Dv=0$ in
$  (0,\infty)^{2}$ with zero condition on the $x$ axis.

\begin{lemma}
                        \label{lemma 5.31.1}

For $n=0,1,2,...$, $t\in[t_{n+1},t_{n}) $ and $x\geq0$
\begin{equation}
                         \label{6.13.3}
u(t,x)\geq v_{n}(t,x):= \int_{0}^{\infty}u(t_{n},y)q_{n}(t_{n} -t,x,y)\,dy.
\end{equation}
\end{lemma}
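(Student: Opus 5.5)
Since $\gamma$ is the same exponent as $\nu$, the idea is a comparison: on $[t_{n+1},t_n)$ the true drift $t^{-\gamma}$ is at least $b_n=t_n^{-\gamma}$, and $u$ is increasing in $x$. Hence $u$ is a supersolution of the frozen-coefficient equation there, and the maximum principle on the quarter plane gives the bound.

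\emph{Step 1: $v_n$ solves the frozen problem.} Fix $n$ and work in $Q_n:=[t_{n+1},t_n)\times(0,\infty)$. By the stated property of $q_n$ (fundamental solution with zero condition on $x=0$), $v_n$ solves
$$
\partial_t v_n+(1/2)D^2v_n+b_nDv_n=0\quad\text{in } Q_n,\qquad v_n(t,0)=0,\qquad v_n(t_n,x)=u(t_n,x),\ x>0.
$$
The terminal datum $u(t_n,\cdot)$ is continuous and bounded by $1$ on $[0,\infty)$, and it vanishes at $0$ because $u$ is odd and continuous on $(0,1]\times\bR$. So $v_n$ is bounded and continuous up to $t=t_n$ and up to $x=0$. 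Growth is not an issue for $t_n-t\le1$: the factor $e^{b_n(y-x)}$ is integrable against the Gaussian.

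\emph{Step 2: $u$ is a supersolution.} On $Q_n$ we have $t>0$ and $x>0$, so equation \eqref{5.30.1} reads $\partial_t u+(1/2)D^2u+t^{-\gamma}Du=0$. By Theorem \ref{theorem 5.30.2}(i)(c), $Du\ge0$, and $t^{-\gamma}\ge t_n^{-\gamma}=b_n$ for $t\le t_n$. Therefore
$$
\partial_t u+(1/2)D^2u+b_nDu=-(t^{-\gamma}-b_n)Du\le 0 \quad\text{a.e. in } Q_n.
$$
Thus $w:=u-v_n$ satisfies $\partial_t w+(1/2)D^2w+b_nDw\le0$ with $w(t_n,\cdot)=0$ and $w(t,0)=0$, and $w$ is bounded.

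\emph{Step 3: maximum principle.} This is the delicate point, because $Q_n$ is unbounded and $w$ has only Sobolev regularity. By Theorem \ref{theorem 5.30.2}(i)(b), $u-g\in W^{1,2}_p((\delta,1)\times\bR)$ for every $p$, so the Krylov--Tso / Aleksandrov-type parabolic maximum principle for $W^{1,2}_{p,\loc}$ functions with bounded drift $b_n$ applies on bounded cylinders. To handle $x\to\infty$, I would add the barrier $\varepsilon(x^2+C(t_n-t)+1)$, where $C$ is chosen so that the barrier is a strict supersolution with drift $b_n$. Then $w+\varepsilon(\cdots)$ cannot attain a negative minimum on $[t_{n+1},t_n]\times[0,R]$ for $R$ large. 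Letting $R\to\infty$ and then $\varepsilon\to0$ gives $w\ge0$, i.e. $u\ge v_n$ on $Q_n$.

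\emph{Alternative route.} If the weak-derivative maximum principle felt shaky, I would instead run the same argument for the smooth approximations $u_\delta$ with drift $b_\delta$. For $\delta<t_{n+1}$ the function $u_\delta$ coincides with $u$ on $[t_{n+1},1]$, and $b_\delta\ge b_n$ there. The argument is then identical, with $u_\delta\in W^{1,2}_p$ globally.
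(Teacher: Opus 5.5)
Your argument is the paper's own. On $[t_{n+1},t_n)\times(0,\infty)$ you use $b\ge b_n$ and $Du\ge0$ to show that $w=u-v_n$ satisfies $\partial_t w+(1/2)D^2w+b_nDw\le 0$ and vanishes at $t=t_n$ and at $x=0$, and you then conclude by the maximum principle. The paper simply invokes that principle without further detail.

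There is one slip in your elaboration of the unbounded-domain step. For $\phi=x^2+C(t_n-t)+1$ one gets $\partial_t\phi+(1/2)D^2\phi+b_nD\phi=1-C+2b_nx$, which is positive for large $x$. So no fixed $C$ makes $\phi$ a supersolution on the whole quarter-plane. Either of the following repairs the step:
\begin{enumerate}
\item Take $\phi=e^{K(t_n-t)}(1+x^2)$ with $K>1+b_n^2$. Then the same expression is at most $e^{K(t_n-t)}\big(b_n^2-(K-1)(1+x^2)\big)<0$.
\item Let $C$ depend on $R$. Using $|w|\le2$, take $R=\sqrt{2/\varepsilon}$ and $C=2+2b_nR$, so that $\varepsilon C\to0$ as $\varepsilon\to0$.
\end{enumerate}
With either fix the proof goes through.
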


Proof. We have $u-v_{n}=0$ on $[t_{n+1},t_{n}]\times\{0\}$ and on $\{t_{n}\}\times [0,\infty)$ and inside $(t_{n+1},t_{n})
\times(0,\infty)$ it holds that
$$
0=\partial_{t}(u-v_{n})+(1/2)D^{2}(u-v_{n})+ b_{n }D(u-v_{n})+(b-b_{n })Du
$$
$$
\geq \partial_{t}(u-v_{n})+(1/2)D^{2}(u-v_{n})+ b_{n }D(u-v_{n}),
$$
where the inequality is due to $b\geq b_{n }$ and $Du\geq0$. Then \eqref{6.13.3} follows from the maximum
principle. \qed

Set $u_{n}=u(t_{n},x_{n})=u(2^{-n},2^{-n/2})$,
$$
\alpha_{n}= \int_{x_{n}}^{\infty} q_{n}(t_{n+1},x_{n+1},y)\,dy
$$
$$
=t_{n+1}^{1/2}\int
_{-z_{n}}^{\infty}
q_{n}(t_{n+1},x_{n+1},zt_{n+1}^{1/2}+x_{n+1}+  b_{n}t_{n+1})\,dz,
$$
where  $z_{n}:= b_{n} t_{n+1}^{1/2}+1-\sqrt 2$. Since, obviously, $q_{n}(t,x,y)>0$ for $x,y>0$, we have $\alpha_{n}>0$.
\begin{corollary}
                   \label{corollary 5.31.1}
For $n=0,1,2,...$  and $x\geq0$
$$
u(t_{n+1},x)\geq \int_{0}^{\infty}u(t_{n},y)q_{n}(t_{n+1},x,y)\,dy.
$$
In particular, $u_{n+1}\geq\alpha_{n}u_{n}$.
\end{corollary}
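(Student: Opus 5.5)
The first assertion is Lemma \ref{lemma 5.31.1} at the single time $t=t_{n+1}$. Since $t_{n+1}\in[t_{n+1},t_{n})$, taking $t=t_{n+1}$ in \eqref{6.13.3} gives
$$
u(t_{n+1},x)\geq\int_{0}^{\infty}u(t_{n},y)q_{n}(t_{n}-t_{n+1},x,y)\,dy .
$$
Because $t_{n}-t_{n+1}=2^{-n}-2^{-n-1}=2^{-n-1}=t_{n+1}$, the kernel is exactly $q_{n}(t_{n+1},x,y)$. So the displayed inequality of the corollary follows immediately for every $x\geq0$. Nothing more is needed here beyond checking this arithmetic of the dyadic times.

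For the second assertion, I would put $x=x_{n+1}$. The integrand is nonnegative on all of $(0,\infty)$. Indeed, $u(t_{n},y)\geq0$ for $y\geq0$, since $u$ is odd and increasing in $x$ by property (c) of Theorem \ref{theorem 5.30.2} (or, directly, as the limit of the nonnegative $u_{\delta}$). Also $q_{n}(t_{n+1},x_{n+1},y)>0$ for $x_{n+1},y>0$, because $p(t,y-x)>p(t,y+x)$ when $x,y>0$. Hence we may throw away the part of the integral over $(0,x_{n})$ and keep only $y\geq x_{n}$:
$$
u_{n+1}=u(t_{n+1},x_{n+1})\geq\int_{x_{n}}^{\infty}u(t_{n},y)q_{n}(t_{n+1},x_{n+1},y)\,dy .
$$
On $[x_{n},\infty)$, monotonicity of $u(t_{n},\cdot)$ gives $u(t_{n},y)\geq u(t_{n},x_{n})=u_{n}$. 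Pulling this bound out of the integral leaves exactly $u_{n}\alpha_{n}$ by the definition of $\alpha_{n}$. This proves $u_{n+1}\geq\alpha_{n}u_{n}$.

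The second expression for $\alpha_{n}$ in the paper, obtained by the change of variables $y=zt_{n+1}^{1/2}+x_{n+1}+b_{n}t_{n+1}$, is not needed for the corollary. The only point requiring care is the sign and monotonicity input, i.e.\ $u(t_{n},\cdot)\geq0$ and nondecreasing on $[0,\infty)$. Both were established in part (i) and in the proof of Lemma \ref{lemma 5.31.1}, so I expect no real obstacle.
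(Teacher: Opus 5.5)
Your proposal is correct and is essentially the argument the paper leaves implicit (it states the corollary without a written proof): take $t=t_{n+1}$ in Lemma~\ref{lemma 5.31.1} and use $t_n-t_{n+1}=t_{n+1}$. Then, for $x=x_{n+1}$, discard $y\in(0,x_n)$ using $u(t_n,\cdot)\geq0$ and $q_n>0$, and bound $u(t_n,y)\geq u_n$ on $[x_n,\infty)$ by monotonicity of $u(t_n,\cdot)$.
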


{\bf Proof of assertion (ii) in Theorem 
\ref{theorem 5.30.2}}. Since $u(t,0)=0\to
0$ as $t\downarrow 0$, it suffices to prove
that $\nliminf_{n\to\infty}u_{n}>0$. By Corollary \ref{corollary 5.31.1}
$$
\nliminf_{n\to\infty}u_{n}\geq u_{0}
\prod_{n=0}^{\infty}\alpha_{n},
$$
so that it suffices to prove that
\begin{equation}
                         \label{5.4.1}
\prod_{n=0}^{\infty}\alpha_{n}>0.
\end{equation}

We have
$$
\alpha_{n}=\beta_{n}-\gamma_{n},
$$
where  
$$
\gamma_{n}=(2\pi)^{-1/2}\int_{-z_{n}}^{\infty}e^{-(z+2)^{2}/2-2b_{n}x_{n+1}}\,dz
$$
$$
\leq(2\pi)^{-1/2}\int_{-\infty}^{\infty}e^{-(z+2)^{2}/2-2b_{n}x_{n+1}}\,dz
= e^{-2b_{n}x_{n+1}},
$$
$$
\beta_{n}=(2\pi)^{-1/2}\int_{-z_{n}}^{\infty}e^{-z^{2}/2}\,dz=1-(2\pi)^{-1/2}\int_{ z_{n}}^{\infty}e^{-z^{2}/2}\,dz.
$$
As is easy to see, $z_{n}>2-\sqrt2$ and
$(2\pi)^{-1/2}z_{n}^{-1}\leq 1/2<1$. Therefore,
by using the inequality
$$
\int_{x}^{\infty}e^{-z^{2}/2}\,dz\leq x^{-1}e^{-x^{2}/2},\quad x>0,
$$
we get that 
$$
\beta_{n}\geq 1-e^{-z_{n}^{2}/2}.
$$
It follows that for some $n_{0}\geq0$ and all
$n\geq n_{0}$
$$
\alpha_{n}\geq 1-e^{-z_{n}^{2}/2}-e^{-2b_{n}x_{n+1}}\geq 1/2.
$$
Since, as easy to see,
$$
\sum_{n=0}^{\infty}\big(e^{-z_{n}^{2}/2}+e^{-2b_{n}x_{n+1}}\big)<\infty,
$$
we have
$$
\prod_{n=n_{0}}^{\infty}\alpha_{n}>0,
$$
which leads to \eqref{5.4.1} and proves the theorem. \qed

We finish the  article
 with justifying \eqref{9.7.2}.

\begin{theorem}
                       \label{theorem 9.7.2}

For $u$  from Theorem \ref{theorem 5.30.2}
the representation
  \eqref{9.7.2} holds
at all points in $ (-\infty,1)\times \bR$
apart from the origin (at which point
  $u$ is discontinuous).
\end{theorem}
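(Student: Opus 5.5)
The plan is to apply Itô's formula to $u(\sft_{s},x_{s})$ along any solution of \eqref{8.7.2} started at $(t,x)\neq(0,0)$, and to show that the trajectory avoids the origin almost surely. Fix $(t,x)$ with $t<1$, $(t,x)\ne(0,0)$, and let $x_{s}$ be any solution of \eqref{8.7.2}. I will proceed in three steps.

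First I show that, $P_{t,x}$-a.s., the space-time trajectory $(t+s,x_{s})$, $s\in[0,1-t]$, does not pass through $(0,0)$. If $t>0$ this is trivial, since $\sft_{s}=t+s>0$. If $t<0$, then on $[0,-t]$ the drift vanishes and $x_{s}=x+w_{s}$. Hitting the origin at time $-t$ means $x+w_{-t}=0$, which has probability zero because $w_{-t}$ has a density. The case $t=0$, $x\ne0$ I would also handle. With $x>0$, say, and before $x_{s}$ first hits zero, we have $x_{s}=x+w_{s}+(1-\nu)^{-1}s^{1-\nu}$, but the trajectory being at $(0,0)$ would require $s=0$, which is excluded since $x\neq 0$. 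So in all cases the path stays in $G$, and by continuity it stays a positive distance from the origin on $[0,1-t]$.

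Second, I apply an Itô formula to $u(\sft_{s},x_{s})$ up to time $1-t$. The difficulty is that $u$ is only $W^{1,2}_{p}$-smooth away from the origin, with $D^{2}u$ possibly discontinuous across $\{x=0,\ \sft>0\}$. I would localize by introducing stopping times $\tau_{\varepsilon}=\inf\{s:|\sft_{s}|+|x_{s}|\le\varepsilon\}\wedge(1-t)$, and on the region $\{|\sft|+|x|\ge\varepsilon\}$ mollify or modify $u$. Near $t\le-\delta$ the function is smooth by (b). On $(\delta',1)\times\bR$ it is $W^{1,2}_{p}$ with bounded drift, so the Itô--Krylov formula applies for $p$ large, using Krylov's estimate for processes with bounded drift. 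Across the strip $(-\delta,\delta')$ away from the origin, $u$ is smooth for $|x|\ge\varepsilon/2$ by (a), and for $|x|<\varepsilon/2$ we have $t\le-\varepsilon/2$ or $t\geq \varepsilon/2$, so the other cases apply. Patching these pieces gives
\[
u(\sft_{\tau_{\varepsilon}},x_{\tau_{\varepsilon}})=u(t,x)+\int_{0}^{\tau_{\varepsilon}}Du\,dw,
\]
because $\cL u=0$ a.e. and the a.e. equation suffices thanks to the Krylov estimates. Since $Du$ is bounded on the relevant set, taking expectations gives $u(t,x)=E_{t,x}u(\sft_{\tau_\varepsilon},x_{\tau_\varepsilon})$.

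Third, I let $\varepsilon\downarrow0$. By the first step, $\tau_{\varepsilon}=1-t$ for all small $\varepsilon$ a.s. Since $|u|\le1$, dominated convergence yields $u(t,x)=E_{t,x}u(1,x_{1-t})=E_{t,x}g(x_{1-t})$, which is \eqref{9.7.2}.

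I expect the main obstacle to be the second step: justifying Itô's formula for $u$ that has only Sobolev derivatives, near the singular time $t=0$ where $b$ is unbounded. I would handle it as above, using that near $\{t=0\}$ but away from the origin $u$ is smooth, which is where the singular drift acts. The Itô--Krylov formula is needed only where $b$ is bounded.
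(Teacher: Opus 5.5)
Your overall strategy (one localized It\^o formula, plus the fact that the space-time path a.s.\ misses the origin) is sound, and Steps 1 and 3 are fine. The gap is in Step 2, exactly at the point you call the main obstacle.

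You assert that $u$ is smooth near $\{t=0\}$ for $|x|\ge\varepsilon/2$ ``by (a)''. But (a) only asserts continuity of the $x$-derivatives, and the claim is false. For $t>0$, $x>0$ one has $u(t,x)=v(t,x-B(t))$, with $v$ smooth in $\Gamma$ but $B(t)=(1-\nu)^{-1}t^{1-\nu}$ not Lipschitz at $0$. Equivalently, $\partial_t u=-\tfrac12D^2u-t^{-\nu}Du$. Here $D^2u$ and $Du$ are continuous up to $t=0$, and $Du(0,x_0)=Dv(0,x_0)>0$ for $x_0>0$ (strong maximum principle for $Dv\ge0$ in $\Gamma$). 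Hence $\partial_t u(t,x_0)\to-\infty$ like $t^{-\nu}$ as $t\downarrow0$, while $\partial_t u=-\tfrac12D^2u$ stays bounded for $t<0$. So $u$ is not $C^{1,2}$ in any neighborhood of $(0,x_0)$. Precisely where the singular drift acts, neither the classical It\^o formula (no smoothness) nor the It\^o--Krylov formula ($b$ unbounded) is available. Your patching therefore has no valid piece covering the crossing of $\{\sft=0\}$.

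The repair is short and uses the same ingredient as the paper: continuity of $u$ on $G$ at points $(0,x)$ with $x\neq0$. Since $\sft_s=t+s$ is deterministic, for $t\le0$ the line $\{\sft=0\}$ is crossed only at $s=-t$.
\begin{itemize}
\item Apply your stopped formula separately on $[0,(-t-\eta)^{+}\wedge\tau_\varepsilon]$, where $\sft_s<0$, $b=0$ and $u$ is smooth.
\item Apply it on $[(-t+\eta)\wedge\tau_\varepsilon,\tau_\varepsilon]$, where $\sft_s\ge\eta$ and $b\le\eta^{-\nu}$, so It\^o--Krylov applies.
\item Take expectations and let $\eta\downarrow0$ by bounded convergence. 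The stopped points converge to the point reached at time $(-t)\wedge\tau_\varepsilon$, which lies in $\{(r,y):|r|+|y|\ge\varepsilon\}\subset G$, where $u$ is continuous.
\end{itemize}
Alternatively, apply It\^o's formula to $v(\sft_s,x_s-B(\sft_s))$, using $d(x_s-B(\sft_s))=dw_s$ while $x_s>0$. With this fix your proof is complete.

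Compared with the paper's route: the paper uses It\^o's formula only for $t>0$. It then reaches $t=0$ and $t<0$ through continuity of $u$ at $(0,x)$, the Markov property, and $P(x_{-t}=0)=0$. Your route avoids the Markov property, so it gives \eqref{9.7.2} for every solution of \eqref{8.7.2} started off the origin. The cost is the localization and patching.
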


Proof. If $t\in(0,1)$ the result follows
by It\^o's formula because $b(s,x)$
is bounded for $s\in[t,1]$. If $t=0$
and $x\ne0$, then relying on the continuity
of $u$ at $(0,x)$ and the Markov property,
we get
$$
u(0,x)=\lim_{\varepsilon\downarrow0}E_{0,x}
u(\varepsilon,x_{\varepsilon})=
\lim_{\varepsilon\downarrow0}E_{0,x}
E_{\varepsilon,x_{\varepsilon}}g(x_{1-\varepsilon})=E_{0,x }g(x_{ 1}).
$$
Finally, if $t<0$, by definition
and the Markov property we obtain
$$
u(t,x)=E_{t,x}u(0,x_{-t})=
E_{t,x}u(0,x_{-t})I_{x_{-t}\ne0}
=E_{t,x}I_{x_{-t}\ne0}E_{ 0,x_{-t}}g(x_{1}) 
$$
$$
=E_{t,x}I_{x_{-t}\ne0} g(x_{1-t})
=E_{t,x}  g(x_{1-t}).
$$   
The theorem is proved.  \qed

Since the left-hand sife of \eqref{9.7.2}
is discontinuous, we have the following.

\begin{corollary}
                             \label{corollary 9.8.2}
The $\cL$-processes are not Feller processes.
\end{corollary}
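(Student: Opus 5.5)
The corollary will follow by combining the representation \eqref{9.7.2} from Theorem \ref{theorem 9.7.2} with assertion (ii) of Theorem \ref{theorem 5.30.2}, arguing by contradiction. Fix any $\cL$-process $X=((\sft_{s},x_{s}),P_{t,x})$ and suppose it is Feller. Here this means that for every bounded continuous $h$ on $\bR^{2}$ and every $s\geq0$, the function $(t,x)\mapsto E_{t,x}h(\sft_{s},x_{s})$ is continuous on $\bR^{2}$.

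\textbf{Step 1 (a continuous function of $(t,x)$).} For $t<1$ we have $E_{t,x}g(x_{1-t})=E_{t,x}g(x_{s})$ with $s=1-t$, which depends on $t$ through $s$. To remove this dependence, note that $\sft_{s}=t+s$ deterministically. Hence $E_{t,x}g(x_{1-t})=E_{t,x}h_{0}(\sft_{s_{0}},x_{s_{0}})$ is not immediate, since the time index itself varies with $t$. I would instead work near the origin with the Markov property at a fixed lag. Take $t$ close to $0$ and $s_{0}=1/2$. Then
\[
E_{t,x}g(x_{1-t})=E_{t,x}\,w(\sft_{1/2},x_{1/2}),\qquad w(r,y):=E_{r,y}g(x_{1-r}).
\]
For $r$ in a neighborhood of $1/2$ (away from $0$), Theorem \ref{theorem 9.7.2} gives $w(r,y)=u(r,y)$, and by Theorem \ref{theorem 5.30.2}(i) $u$ is bounded and continuous on $[1/4,3/4]\times\bR$. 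Extend $w$ off this strip to a bounded continuous function $h$ on $\bR^{2}$; since $\sft_{1/2}=t+1/2$ stays in the strip for $|t|<1/4$, nothing changes. The Feller property then shows that
\[
(t,x)\mapsto E_{t,x}h(\sft_{1/2},x_{1/2})=E_{t,x}g(x_{1-t})
\]
is continuous on $(-1/4,1/4)\times\bR$, and in particular at $(0,0)$.

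\textbf{Step 2 (contradiction).} By Theorem \ref{theorem 9.7.2}, for $(t,x)\neq(0,0)$ in this neighborhood the function above equals $u(t,x)$. Continuity at the origin would then force $\lim_{(t,x)\to(0,0)}u(t,x)$ to exist and equal $E_{0,0}g(x_{1})$. This contradicts Theorem \ref{theorem 5.30.2}(ii): the proof there gives $u(t,0)=0$ for $t\in(0,1]$, while $\nliminf u(2^{-n},2^{-n/2})>0$. So no $\cL$-process is Feller.

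The only delicate point is Step 1. Because the time horizon $1-t$ moves with the starting time, the Feller property cannot be applied naively to $g(x_{1-t})$. Splitting at the fixed lag $1/2$ with the Markov property, and using that the space-time process has deterministic time coordinate, is what turns the statement into one about a single bounded continuous test function $h$. Everything else is a direct citation of the two theorems.
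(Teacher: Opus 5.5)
Your proof is correct and follows the paper's argument: representation \eqref{9.7.2} off the origin combined with the nonexistence of $\lim_{(t,x)\to(0,0)}u(t,x)$ from Theorem \ref{theorem 5.30.2}(ii). You also make explicit, via the Markov property at the fixed lag $1/2$ and the bounded continuous extension $h$ of $u$ from the strip $[1/4,3/4]\times\bR$, why Feller continuity applies even though the horizon $1-t$ moves with $t$, a step the paper leaves implicit.
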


 {\bf Author contributions} The paper is written in its entirety by the author
 \medskip
 
{\bf Data Availability Statement} No datasets were generated or analyzed during
the current study.\medskip

{\bf Declarations}\medskip

{\bf Competing interests} The authors declare no competing interests.

\end{document}